\renewcommand{\preceq}{\preccurlyeq}
\newcommand{\supp}{\operatorname{supp}}
\newcommand{\SOP}{\operatorname{SOP}}
\newcommand{\IP}{\operatorname{IP}}
\newcommand{\TP}{\operatorname{TP}_2}
\newtheorem{theorem}{Theorem}[section]
\newtheorem{lemma}[theorem]{Lemma}
\newtheorem{corollary}[theorem]{Corollary}
\newtheorem{proposition}[theorem]{Proposition}
\theoremstyle{definition}
\newtheorem*{example}{Example}
\newcommand{\bigO}{\mathcal{O}}
\newcommand{\smallo}{{\scriptstyle\mathcal{O}}}
\newcommand{\smallk}{{\mathbf{k}}}
\newcommand{\N}{\mathbb{N}}
\newcommand{\fM}{\mathfrak{M}}
\newcommand{\fm}{\mathfrak{m}}
\newcommand{\fn}{\mathfrak{n}}
\newcommand{\cM}{\mathcal{M}}
\newcommand{\cP}{\mathcal{P}}
\title{Truncation in Hahn Fields is Undecidable and Wild}
\author{Santiago Camacho}
\address{Department of Mathematics, University of Illinois at Urbana-Champaign, 1409 West Green Street,
Urbana}
\email{scamach2@illinois.edu}
\date{}
\begin{document}

\begin{abstract}
We show that in any nontrivial Hahn field with truncation as a primitive operation we can interpret the monadic second-order logic of the additive monoid of natural numbers. We also specify a definable binary relation on such a structure that has $\SOP$ and $\TP$.
\end{abstract}
\footnote{MSC: 03C45, 03C64, Hahn Fields, Undecidability, Classification Theory}
\maketitle

\section{Introduction and Notations}

Generalized series have been used in the past few decades in order to generalize classical asymptotic series expansions such as Laurent series and Puiseux series. Certain generalized series fields, such as the field of Logarithmic-Exponential Series \cite{DMM}, provide for a richer ambient structure, due to the fact that these series are closed under many common algebraic and analytic operations. In the context of generalized series the notion of truncation becomes an interesting subject of study. In the classical cases, a proper truncation of any Laurent series $\sum_{k\geq k_0}{r_kx^k}$ amounts to a polynomial in the variables $x$ and $x^{-1}$. In the general setting a proper truncation of an infinite series can still be an infinite series. It has been shown by various authors \cite{D,F,MR} that truncation is a robust notion, in the sense that certain natural extensions of truncation closed sets and rings remain truncation closed. We here look at some first-order model theoretic properties of the theory of a Hahn Field equipped with truncation. We show that such theories are very wild in the sense that they can even interpret the theory of $(\N;+,\times)$ via the interpretation of $(\N, \mathcal{P}(\N); +,\in)$, and are thus undecidable. We also indicate definable binary relations with ``bad'' properties such as the strict order property and the tree property of the second kind. The author would like to thank Philipp Hieronymi and Erik Walsberg for bringing the bibliography of monadic second-order logic to his attention. 

\noindent
\subsection*{Notations}
We let $m$ and $n$ range over $\N=\{0,1,\ldots\}$.
For a set $S$, we denote its powerset by $\cP(S)$.
Given a set $S$, and a tuple of variables $x$ we write $S^x$ for the cartesian product $S^{|x|}$ where $|x|$ denotes the length of the tuple $x$. 
Given a language $L$, an $L$-formula $\phi(x)$, and an $L$-structure $\cM= (M;\ldots)$ we let $\phi(M^x)$ denote the set $\{a \in M^x: \cM\models \phi(a)\}$. For a field $K$ we let $K^\times =K\setminus\{0\}$.
\section{Preliminaries}

\subsection*{Hahn Series}
Let $\Gamma$ be an additive ordered abelian group. 
Let $\smallk$ be a field. We indicate a function $f:\Gamma\rightarrow \smallk$ suggestively as a series $f=\sum_{\gamma\in \Gamma}f_\gamma t^\gamma$ where $f_\gamma = f(\gamma)$ and $t$ is a symbol, and let $\supp(f) := \{\gamma\in \Gamma: f_\gamma \neq 0\}$ be the support of $f$. 
We denote the Hahn series field over $\smallk$ with value group $\Gamma$ by 
\[\smallk((t^\Gamma)):= \left\{f=\sum_{\gamma\in \Gamma}f_\gamma t^\gamma: \supp(f) \text{ is well-ordered}\right\},\]
equipped with the usual operations of addition, and multiplication, that is with $\alpha,\beta,\gamma $ ranging over $\Gamma$:
\[f+g=\left(\sum_{\gamma}f_\gamma t^\gamma\right)+\left(\sum_{\gamma}g_\gamma t^\gamma\right) = \sum_{\gamma}\left(f_\gamma +g_\gamma\right)t^\gamma,\]
\[fg = \left(\sum_{\gamma}f_\gamma t^\gamma\right)\left(\sum_{\gamma}g_\gamma t^\gamma\right) = \sum_{\gamma} \left(\sum_{\alpha + \beta = \gamma} f_\alpha g_\beta \right)t^\gamma.\]

Let $f=\sum_{\gamma}f_\gamma t^\gamma$ be in $\smallk((t^\Gamma))$ and $\delta\in \Gamma$. The truncation of $f$ at $\delta$ is $\sum_{\gamma< \delta} f_\gamma t^\gamma$ and we shall denote it by $f|_\delta$.
We call $f$ \textbf{purely infinite}, \textbf{bounded}, \textbf{infinitesimal} if $\supp(f)\subseteq \Gamma^{<0}$, $\supp(f) \subseteq \Gamma^{\geq 0}$,  $\supp(f)\subseteq \Gamma^{>0}$, respectively.
We will distinctly name three components of $f$:
the \textbf{purely infinite part} $f|_0$, the \textbf{bounded part} $f_\preceq := f-f|_0$, and the \textbf{infinitesimal part} $f_\prec := f-f_\preceq$, so $f= f|_0 + f_{\preceq}, f_\preceq = f_0 + f_\prec$\\

\section{Valued Fields}

A \textbf{valued field} is a field $K$ equipped with a surjective map $v:K \rightarrow \Gamma\cup \{\infty\}$,  where $\Gamma$ is an additive ordered abelian group, such that for all $f,g \in K$ we have
\begin{enumerate}
	\item[(V0)] $v(f)= \infty \iff f=0,$
	\item[(V1)] $v(fg) = v(f) + v(g)$, 
    \item[(V2)] $v(f+g) \geq \min\{v(f),v(g)\}$
\end{enumerate}

Every valued field gives rise to:
\begin{enumerate}
	\item The valuation ring $\bigO:= \{f\in K: v(f)\geq 0\}$, which is a local ring,
    \item the maximal ideal $\smallo := \{f\in K:v(f)>0\}$ of $\bigO$, and
    \item the residue field $\smallk:= \bigO/\smallo$ of $K$.
\end{enumerate}

\begin{example}The canonical valuation on $\smallk((t^\Gamma))$ is given by the map $v:\smallk((t^\Gamma))^\times \rightarrow \Gamma$ where $v(f) = \min (\supp(f))$. We then observe that the corresponding valuation ring consists of the bounded elements of $\smallk((t^\Gamma))$, the maximal ideal of the valuation ring consists of the infinitesimal elements of $\smallk((t^\Gamma))$ and the residue field is isomorphic to $\smallk$. 
\end{example}

\subsection*{Other Structures in valued fields}
A monomial group $\fM$ of a valued field $K$ is a multiplicative subgroup of $K^\times$ such that for every $\gamma\in \Gamma$ there is a unique element $\fm\in \fM$ such that $v(\fm)= \gamma$.
\begin{example} 
Let $K = \smallk((t^\Gamma))$. Then the \textbf{canonical monomial group} of $K$ is the set $\{t^\gamma : \gamma \in \Gamma\}$.
\end{example}

\medskip \noindent
An \textbf{additive complement to the valuation ring} $\bigO$ of a valued field $K$ is an additive subgroup $V$ of $K$ such that $K = V\oplus\bigO$
\begin{example}
Let $K=\smallk((t^\Gamma))$. Then the \textbf{canonical additive complement} for $K$ is the set of purely infinite elements of $K$.
\end{example}

\section{The natural numbers}
\noindent
We start with the following well known result.
\begin{theorem}\label{Nund}
The theory of $(\N;+,\times)$ is undecidable.
\end{theorem}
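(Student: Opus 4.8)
The plan is to reduce the halting problem to the decision problem for $\mathrm{Th}(\N;+,\times)$, using three ingredients: (i) fix a standard enumeration $(\varphi_e)_{e\in\N}$ of the partial computable functions and recall that the set $K:=\{e\in\N:\varphi_e(e)\ \text{halts}\}$ is computably enumerable but not computable; (ii) show that $K$ is \emph{definable} in $(\N;+,\times)$, i.e. there is a formula $\phi(x)$ in the language $\{+,\times\}$ with $n\in K \iff (\N;+,\times)\models\phi(n)$; (iii) observe that (ii) exhibits a many-one reduction of $K$ to $\mathrm{Th}(\N;+,\times)$, since to test $n\in K$ one forms the sentence $\phi(\underline n)$, where $\underline n$ is the numeral $1+\cdots+1$ ($n$ summands), and asks whether it lies in the theory. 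A decision procedure for the theory would then decide $K$, which is impossible.

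For (ii) I would first arithmetize computations: a halting run of machine $e$ on input $e$ is a finite sequence of configurations, each coded by a natural number, subject to arithmetically checkable local constraints, namely that the first configuration is the one determined by $e$, each configuration follows from its predecessor by the transition table of machine $e$, and the last configuration is halting. The relation ``configuration $c'$ is the successor of configuration $c$ under machine $e$'' is primitive recursive, and hence (once we can quantify over finite sequences) first-order definable over $(\N;+,\times)$.

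The technical heart of the argument, and the step I expect to be the main obstacle, is precisely this coding of finite sequences of naturals by single naturals using only $+$ and $\times$. Here I would invoke G\"odel's $\beta$-function $\beta(a,b,i):=a\bmod(1+(i+1)b)$, whose graph is plainly $\{+,\times,<\}$-definable, with $<$ in turn $\{+\}$-definable via $x<y \iff \exists z\,(z\neq 0\wedge x+z=y)$. By the Chinese Remainder Theorem, for every finite sequence $(c_0,\dots,c_\ell)$ there are $a,b\in\N$ with $\beta(a,b,i)=c_i$ for all $i\le\ell$. Replacing each quantifier over sequences by a pair of quantifiers over such $a,b$ converts the informal description above into a genuine $\{+,\times\}$-formula $\phi(x)$ defining $K$.

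Assembling the pieces, $\phi$ witnesses $n\in K \iff \phi(\underline n)\in \mathrm{Th}(\N;+,\times)$, so decidability of the theory would yield decidability of the non-computable set $K$, a contradiction. One could instead cite Church's theorem or the G\"odel--Rosser theorem; I prefer the explicit reduction from the halting problem because it is self-contained, and because the later sections will mirror exactly this kind of definability bookkeeping when interpreting $(\N,\cP(\N);+,\in)$.
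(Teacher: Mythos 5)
The paper does not prove this theorem; it simply records it as a ``well known result'' and moves on, so there is no in-text argument to compare yours against. Your proposal is a correct and fairly standard rendition of the classical argument: reduce the halting problem to $\mathrm{Th}(\N;+,\times)$ by arithmetizing computation, with G\"odel's $\beta$-function handling the coding of finite sequences so that the halting set $K$ becomes $\Sigma_1$-definable in the language $\{+,\times\}$, and then observe that a decision procedure for the theory would decide $K$. You correctly identify the sequence-coding step as the technical crux, and you correctly note that $<$ is $\{+\}$-definable so that $\beta$'s graph is expressible. The one point worth flagging is that the passage from ``the successor-configuration relation is primitive recursive'' to ``it is first-order definable over $(\N;+,\times)$'' is itself a small theorem (every p.r., indeed every $\Sigma_1$, relation is arithmetically definable); this is standard but should be cited or sketched rather than asserted, since it is doing real work. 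You might also note that the paper's downstream use of this theorem (via Lemma~\ref{timesinMOSNplus} and Corollary~\ref{MOSNplus}) only needs the bare undecidability statement, so citing Church's theorem or the G\"odel--Rosser incompleteness theorem, as you mention at the end, would be equally appropriate and shorter; your explicit reduction is a legitimate self-contained alternative.
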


\subsection*{Monadic Second-Order Logic} Given a structure $\cM=(M;\ldots)$, \textbf{monadic second-order logic of} $\cM$ extends first-order logic over $\cM$ by allowing quantification of subsets of $M$.
More precisely it amounts to considering the two-sorted structure $(M,\cP(M);\ldots,\in)$, where $\in \subseteq M\times \cP(M)$ has the usual interpretation.
The following Theorem and its proof appear in \cite{G}.
\begin{theorem}
The theory of $(\N,\cP(\N);\in)$ is decidable.
\end{theorem}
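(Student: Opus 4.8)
The plan is to reduce to Büchi's theorem on the decidability of the monadic second-order theory of the successor structure $(\N;S)$ (the theory called S1S), and then to reconstruct that theorem via automata on infinite words. First observe that $(\N,\cP(\N);\in)$ is a reduct of the two-sorted structure $(\N,\cP(\N);S,\in)$ that underlies S1S: every sentence in the language $\{{=},{\in}\}$ with first-order quantifiers over $\N$ and monadic second-order quantifiers over $\cP(\N)$ is \emph{a fortiori} an S1S-sentence, and its truth value is the same in both structures since the interpretations of the shared symbols agree. Hence it is enough to show that the S1S-theory is decidable, for then decidability passes to the reduct by restricting the decision procedure to the smaller class of sentences.

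To prove decidability of S1S I would follow the automata-theoretic route used in \cite{G}. Encode an assignment of values to the free (set and element) variables of a formula $\varphi(X_1,\dots,X_k)$ — element variables viewed as singletons — as an $\omega$-word over the alphabet $\{0,1\}^k$ whose $i$-th letter records, for each $j$, whether $i\in X_j$. Then show by induction on the structure of $\varphi$ that the set $L_\varphi$ of $\omega$-words coding satisfying assignments is $\omega$-regular, i.e.\ recognized by a nondeterministic Büchi automaton: atomic formulas yield small explicit automata; conjunction corresponds to the product construction (intersection); disjunction to union; existential set-quantification to projection, i.e.\ erasing one alphabet coordinate, which preserves Büchi-recognizability; and negation to complementation. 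For a \emph{sentence} $\varphi$, the language $L_\varphi$ lives over a one-letter alphabet and $\varphi$ holds in $(\N;S)$ iff $L_\varphi\neq\varnothing$; since nonemptiness of a Büchi automaton is decidable (test for a reachable cycle through an accepting state), this gives the decision procedure.

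The main obstacle is the complementation step: unlike automata on finite words, Büchi automata are not determinizable by the subset construction, so closure under complement is the genuinely hard part. I would present Büchi's original Ramsey-type argument — introduce the finite congruence on finite words given by the automaton's transition profiles, use Ramsey's theorem to show every $\omega$-word has an ultimately periodic profile decomposition, and conclude that every $\omega$-regular language and its complement are finite unions of sets $U\cdot V^{\omega}$ with $U,V$ regular, hence Büchi-recognizable — or, alternatively, invoke McNaughton/Safra determinization into Muller automata, where complementation is trivial. Once this is in hand, decidability of $(\N,\cP(\N);\in)$ follows immediately from the reduct remark above. (A self-contained and more elementary proof of the stated theorem is also available, since over the bare relation $\in$ the only properties expressible up to a given quantifier rank are the cardinalities, truncated at that rank, of the atoms of the finite Boolean algebra generated by the quantified sets; a routine quantifier-elimination or Ehrenfeucht--Fraïssé argument then suffices. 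I would still favour the automata-theoretic statement, as it is the one recorded in \cite{G} and the one whose failure for $(\N,\cP(\N);+,\in)$ drives the rest of the paper.)
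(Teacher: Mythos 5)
The paper does not give its own proof of this theorem: it simply defers to Gurevich \cite{G} with the remark that the theorem and its proof appear there. So there is nothing in the paper to compare against, and your attempt must be judged on its own merits and against the cited reference.

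Your main route is sound and is the one Gurevich follows. The reduct observation is correct: $(\N,\cP(\N);\in)$ shares both sorts with the two-sorted presentation of S1S, every sentence in the smaller language is an S1S-sentence with the same truth value, and decidability passes to reducts. Your sketch of Büchi's theorem (encoding variable assignments as $\omega$-words over $\{0,1\}^k$, closure of $\omega$-regular languages under Boolean operations and projection, and nonemptiness testing by cycle detection) correctly identifies complementation as the nontrivial step and correctly names the two standard ways around it (Büchi's Ramsey-type transition-profile argument, or McNaughton/Safra determinization). This is a complete and correct proof.

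Your parenthetical at the end deserves more weight than you give it. The theorem as stated is considerably weaker than S1S decidability, because $(\N,\cP(\N);\in)$ carries no successor, order, or arithmetic: it is just a countably infinite pure set together with its powerset and membership. Its monadic second-order theory is, up to translation, the first-order theory of an infinite atomic Boolean algebra, and a routine Ehrenfeucht--Fra\"iss\'e or quantifier-elimination argument --- showing that up to quantifier rank $n$ a tuple of subsets is determined by the cardinalities, truncated at (say) $2^n$, of the cells of the Venn diagram it generates --- gives a self-contained decision procedure with no automata at all. The automata route buys you the much stronger S1S theorem, which is what the paper is implicitly contrasting with the undecidability of $(\N,\cP(\N);+,\in)$; the direct route better isolates why this particular reduct is so tame. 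Either proof would serve, and both are correct.
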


\begin{lemma}\label{timesinMOSNplus}
Multiplication on $\N$ is definable in $(\N,\mathcal{P}(\N);+, \in)$.
\end{lemma}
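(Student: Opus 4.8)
The plan is to use a single set quantifier to pin down the graph of the squaring function $n\mapsto n^2$, and then recover multiplication from the polarization identity $2xy=(x+y)^2-x^2-y^2$. Since the first-order theory of $(\N;+)$ is decidable (Presburger), the set quantifier is genuinely essential: although the usual coding of finite sequences by single natural numbers requires multiplication, the one specific infinite set $\{n^2:n\in\N\}$ can be singled out purely additively, by a local condition on its consecutive gaps. First, note that the following are already first-order definable in $(\N;+,\in)$: the constant $0$ (the unique $x$ with $x+x=x$); the order $x\le y:\Leftrightarrow\exists z\,(x+z=y)$ and its strict version; the constant $1$ (the least element of $\{x:x\ne0\}$), hence every numeral; and, for a set variable $S$, the relation $\mathrm{next}(S,a,b):\Leftrightarrow b\in S\wedge a<b\wedge\forall c\,(c\in S\wedge a<c\to b\le c)$ saying that $b$ is the least element of $S$ above $a$.

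I claim $\{n^2:n\in\N\}$ is the unique $S\subseteq\N$ satisfying: (i) $0\in S$ and $1\in S$; (ii) $S$ is unbounded; and (iii) whenever $\mathrm{next}(S,a,b)$ and $\mathrm{next}(S,b,c)$ hold, $c+a=b+b+1+1$ (i.e.\ the gap grows by $2$ at each step). Writing $S$ increasingly as $s_0<s_1<\cdots$, clauses (i)--(ii) force $s_0=0$ and $s_1=1$, and then (iii) gives $s_{k+1}+s_{k-1}=2s_k+2$, whence $s_k=k^2$ by induction (equivalently, the gaps are $1,3,5,\dots$ and $\sum_{j<k}(2j+1)=k^2$); the converse inclusion is clear. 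Let $\mathrm{Sq}(S)$ be the $(\N;+,\in)$-formula expressing (i)--(iii). Since the gap above $s_k=k^2$ is $2k+1$, this yields the graph of squaring:
\[
z=x^2\quad\Longleftrightarrow\quad\exists S\,\Big(\mathrm{Sq}(S)\ \wedge\ z\in S\ \wedge\ \exists w\,\big(\mathrm{next}(S,z,w)\,\wedge\,w=z+x+x+1\big)\Big).
\]

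From $(x+y)^2=x^2+y^2+2xy$ we then obtain
\[
z=x\cdot y\quad\Longleftrightarrow\quad\exists u\,\exists v\,\exists w\,\big(u=(x+y)^2\ \wedge\ v=x^2\ \wedge\ w=y^2\ \wedge\ u=v+w+z+z\big),
\]
and unwinding the definition of squaring gives a formula of $(\N;+,\in)$ defining multiplication. The step I expect to require the most care is the uniqueness half of the claim about $\mathrm{Sq}$: clause (iii) by itself is satisfied by many ``quasi-arithmetic'' sets (for instance $\{k(k+2):k\in\N\}=\{0,3,8,15,\dots\}$), so the base values in (i)--(ii) genuinely matter; granting that, everything else is routine manipulation of $+$.
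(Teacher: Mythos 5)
Your proof is correct, but it follows a different route than the paper. The paper's key nonlinear primitive is \emph{divisibility}: it defines $m\mid n$ by the second-order condition ``$n$ lies in every set containing $0$ and closed under $x\mapsto x+m$,'' then recovers $m(m+1)$ as the number whose multiples are exactly the common multiples of $m$ and $m+1$, and finally gets $mk$ from the identity $(m+k)(m+k+1)=m(m+1)+k(k+1)+2mk$. Your key primitive is instead the \emph{squaring function}: you characterize $\{n^2:n\in\N\}$ as the unique unbounded set containing $0$ and $1$ whose consecutive gaps increase by $2$, read off $x$ from the gap above $x^2$, and then use the polarization identity $(x+y)^2=x^2+y^2+2xy$. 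Both arguments spend exactly one monadic quantifier to escape Presburger arithmetic and then close with an essentially identical polarization step; the paper's definition of divisibility is the classical one from the literature on weak monadic second-order arithmetic, while yours pins down a single distinguished set by a local difference equation, which arguably makes the uniqueness argument more self-contained and transparent. Your reasoning (in particular the induction giving $s_k=k^2$ from $s_0=0$, $s_1=1$, and $s_{k+1}+s_{k-1}=2s_k+2$, and the recovery of $x$ from $w-z=2x+1$) is sound; the only cosmetic slip is that the final formula lives in the two-sorted structure $(\N,\mathcal{P}(\N);+,\in)$ rather than $(\N;+,\in)$ as written near the end.
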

\begin{proof}
If the multiplication of consecutive numbers is defined, then general multiplication of two natural numbers can be defined in terms of addition:
\[n=mk \iff (m+k)(m+k+1) = m(m+1) + k(k+1) + n + n.\]

\noindent
If divisibility is defined, then multiplication of consecutive numbers is defined by
\[n=m(m+1)\iff \forall k(\in \N) (n|k \leftrightarrow [m|k \wedge (m+1)|k]).\]

\noindent
Divisibility can be defined using addition by
\[m|n \iff \forall S(\in \cP(\N)) (0\in S \wedge \forall x(\in \N) (x\in S \rightarrow x+m\in S) \rightarrow n\in S).\]
Since addition is a primitive, multiplication is defined in $(\N,\mathcal{P}(\N); +, \in)$.
\end{proof}

\begin{corollary}\label{MOSNplus}
The theory of $(\N,\mathcal{P}(\N); +, \in)$ is undecidable.
\end{corollary}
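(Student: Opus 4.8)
The plan is to reduce the decision problem for $\mathrm{Th}(\N;+,\times)$ to that of $\mathrm{Th}(\N,\cP(\N);+,\in)$ and then invoke Theorem~\ref{Nund}. By Lemma~\ref{timesinMOSNplus} there is a formula $\mu(x,y,z)$ in the language of $(\N,\cP(\N);+,\in)$ such that for all $a,b,c\in\N$ we have $(\N,\cP(\N);+,\in)\models\mu(a,b,c)$ if and only if $c=ab$. Inspecting the proof of that lemma, the formula $\mu$ is built explicitly out of the definitions of divisibility and of multiplication of consecutive integers, and is therefore produced effectively.

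Next I would fix a computable translation $\sigma\mapsto\sigma^{*}$ from sentences in the language $\{+,\times\}$ to sentences in the language of $(\N,\cP(\N);+,\in)$. First rewrite $\sigma$ so that every atomic subformula involving $\times$ has the shape $v_{1}=v_{2}\times v_{3}$ for variables $v_{i}$, introducing fresh first-sort existential quantifiers as needed; then replace each such atom by $\mu(v_{2},v_{3},v_{1})$ and relativize all quantifiers of $\sigma$ to the first sort. A routine induction on the structure of $\sigma$ shows that $(\N;+,\times)\models\sigma$ if and only if $(\N,\cP(\N);+,\in)\models\sigma^{*}$.

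Finally, suppose toward a contradiction that $\mathrm{Th}(\N,\cP(\N);+,\in)$ is decidable. To decide whether a given sentence $\sigma$ lies in $\mathrm{Th}(\N;+,\times)$ one computes $\sigma^{*}$ and tests whether $\sigma^{*}\in\mathrm{Th}(\N,\cP(\N);+,\in)$; this furnishes a decision procedure for $\mathrm{Th}(\N;+,\times)$, contradicting Theorem~\ref{Nund}. Hence $\mathrm{Th}(\N,\cP(\N);+,\in)$ is undecidable.

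I do not anticipate any real obstacle here: the content is entirely carried by Lemma~\ref{timesinMOSNplus}, and the only point demanding a little care is that the defining formula for multiplication, and hence the translation $\sigma\mapsto\sigma^{*}$, must be exhibited effectively and with quantifiers sorted correctly over $\N$ rather than over $\cP(\N)$ — both of which are immediate from the explicit formulas in the lemma.
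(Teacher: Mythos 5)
Your argument is correct and is exactly the paper's approach: the paper's proof consists of the single line ``This follows from Lemma~\ref{timesinMOSNplus} and Theorem~\ref{Nund},'' and you have merely spelled out the standard interpretation/reduction details (effective translation, relativization to the first sort) that the paper leaves implicit.
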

\begin{proof}
This follows from Lemma \ref{timesinMOSNplus} and Theorem \ref{Nund}.
\end{proof}

\section{Hahn Fields with Truncation}\label{HFWT}
Let $K=\smallk((t^\Gamma))$ be a Hahn field with non-trivial value group $\Gamma$. We consider $K$ as an $L$-structure where $L=\{0,1,+,\times,\bigO, \fM, V\}$, and the unary predicate symbols $\fM, \bigO,$ and $V$ are interpreted respectively as the canonical monomial group $t^\Gamma$, the valuation ring, and the canonical additive complement to $\bigO$. For $\gamma \in \Gamma$ and $\fm = t^\gamma$ we set $f|_\fm : = f|_\gamma$. Then we have the equivalence (for $f$, $v$ $\in  K$)

\[f|_1 = v \iff v\in V\ \&\ \exists g\in \bigO (f=v+g),\]
showing that truncation at $1$ is definable in the $L$-structure $K$. 
For $\fm \in t^\Gamma$ and $f\in K$ we have 
\[f|_\fm =  g \iff (\fm^{-1}f)|_0=\fm^{-1}g,\]
showing that the operation $(f,\fm)\mapsto f|_\fm : K\times t^\Gamma \rightarrow K$ is definable in the $L$-structure $K$. 

For convenience of notation we introduce the asymptotic relations $\preceq,\prec ,$ and $\asymp$ on $K$ as follows. 
For $f,g\in K$, $f\preceq g$ if and only if there is $h\in \bigO$ such that $f=gh$, likewise $f\prec g$ if and only if $f\preceq g$ and $g\not\preceq f$, and  $f\asymp g$ if and only if $f\preceq g$ and $g\preceq f$. 
Let $R := \{(\fm,f)\in t^\Gamma\times K: \fm\in t^{\supp(f)}\}$. Then $R$ is definable in the $L$-structure $K$ since for $a,b\in K$
\[(a,b)\in R\iff  a\in t^\Gamma \text{ and } b-b|_a \asymp a.\]

\begin{theorem}\label{NinK}
The $L$-structure $K$ interprets $(\N,\mathcal{P}(\N); +, \in)$.
\end{theorem}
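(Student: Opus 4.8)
The plan is to build an interpretation of the two-sorted structure $(\N,\cP(\N);+,\in)$ inside $K$ by using the value group $\Gamma$ as a skeleton for $\N$ and using Hahn series (or rather their supports) to code subsets of $\N$. The first step is to isolate a copy of $\N$ inside $t^\Gamma$. Since $\Gamma$ is a nontrivial ordered abelian group, fix some $\fm \in t^\Gamma$ with $v(\fm)>0$; then the powers $\fm^n$ for $n\in\N$ should be definable as the smallest subset of $t^\Gamma$ containing $1$ and closed under multiplication by $\fm$ — but ``smallest subset'' is exactly a monadic-second-order notion, so instead I would define $N := \{\fm^n : n\in\N\}$ using the order $\prec$ restricted to $t^\Gamma$: $\fm^n$ is characterized by the discreteness of the sequence $1 \succ \fm \succ \fm^2 \succ \cdots$. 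Concretely, an element $\fn\in t^\Gamma$ lies in $N$ iff $\fn \preceq 1$ and the interval $\{\fp\in t^\Gamma : \fn \prec \fp \prec 1\}$ is ``finite'' in the sense of being well-captured; the cleaner route is to notice that $+$ on $\N$ becomes multiplication on $N$ (i.e. $\fm^m\cdot\fm^n=\fm^{m+n}$), and that I can define the successor relation $\fp \mapsto \fm\fp$ directly, so $N$ is the set of elements reachable from $1$ by finitely many successor steps. To make ``finitely many'' first-order I would instead code each $\fm^n$ by the finite support $\{0,v(\fm),2v(\fm),\dots,nv(\fm)\}$, i.e. by a specific Hahn series, and this dovetails with the coding of subsets below.

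For the second sort, I would represent a subset $A\subseteq\N$ by any $f\in K$ whose support, intersected with $\Z_{\geq 0}\cdot v(\fm)$, is exactly $\{nv(\fm):n\in A\}$ — equivalently, by the relation $R$ already shown definable: $\fm^n \in t^{\supp(f)}$ codes ``$n\in A$''. So the membership relation $\in$ of the interpreted structure is just the restriction of $R$ to $N\times K$, which is definable by the displayed equivalence in the excerpt. Two series $f,g$ code the same subset iff $(\fm^n,f)\in R \leftrightarrow (\fm^n,g)\in R$ for all $\fm^n\in N$; this is the definable equivalence relation for the second sort. Every $A\subseteq\N$ is realized, since $\sum_{n\in A} t^{nv(\fm)}$ has well-ordered support (as $v(\fm)>0$ the exponents $nv(\fm)$ increase with $n$), so the second sort of the interpretation surjects onto $\cP(\N)$.

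The third step is to define $+$ on the first sort $N$. Since $\fm^m\fm^n=\fm^{m+n}$, field multiplication restricted to $N$ literally is addition of natural numbers, and $N$ is closed under it; so nothing further is needed once $N$ is defined. Putting these together, the map $n\mapsto\fm^n$, $A\mapsto\sum_{n\in A}t^{nv(\fm)}$ gives an isomorphism from $(\N,\cP(\N);+,\in)$ onto the interpreted structure $(N, K/{\sim}; \times\restriction N, R\restriction)$, and all the defining formulas are $L$-formulas because $\preceq$, $\prec$, $\asymp$, $t^{\supp(\cdot)}$, truncation at a monomial, and $R$ were all shown definable in the preceding part of Section~\ref{HFWT}.

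The main obstacle is the first step: pinning down $N=\{\fm^n:n\in\N\}$ as a definable subset of $K$ without already having second-order quantifiers or a notion of finiteness. The difficulty is that ``the set of finite powers of $\fm$'' is prima facie not first-order definable in an ordered group. The resolution I would pursue is to not define $N$ as a stand-alone set at all, but to define the whole interpreted two-sorted structure simultaneously: take the second sort to be (equivalence classes of) those $f\in K$ whose support is a well-ordered subset of $v(\fm)\cdot\N$ that is moreover an \emph{initial segment closed downward under subtracting $v(\fm)$} — i.e. of the form $\{0,v(\fm),\dots,nv(\fm)\}$ or all of $v(\fm)\cdot\N$ — and recover $\N$ itself as the ``lengths'' of the bounded such series, with $0\in A$, successor, and $+$ read off from $R$ and truncation. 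This turns the finiteness issue into a statement about existence of a top element of a support, which \emph{is} expressible via $\preceq$ and $R$. I expect verifying that this coding captures exactly $\N$ (no nonstandard cuts sneak in, because every element of $K$ has well-ordered support and we only ever use \emph{bounded} series with support in the discrete set $v(\fm)\cdot\N$, whose well-ordered subsets with a maximum are genuinely finite) to be the crux of the argument; the rest is the routine bookkeeping of writing down the $L$-formulas and checking the interpretation axioms.
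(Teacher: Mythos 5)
Your overall framework matches the paper's: code $n$ by $\fn^n$ for a fixed monomial $\fn\prec 1$, code subsets of $\N$ via supports, read addition off of multiplication of monomials, and read membership off the definable relation $R$. You also correctly identify the one real obstacle, namely that $\{\fn^n:n\in\N\}$ is not first-order definable ``out of thin air.'' But you miss the paper's very short resolution: an interpretation may use parameters, and the single parameter $f=\sum_n \fn^n$ (a legitimate element of $K$, since $\supp(f)=v(\fn)\cdot\N$ is well-ordered) does all the work at once. The first sort is $t^{\supp(f)}=\{\fm\in t^\Gamma:(\fm,f)\in R\}$, definable from $f$; the second sort is $\{g:\supp(g)\subseteq\supp(f)\}$ modulo the equivalence ``same support,'' again definable from $f$ via $R$; and then $\iota(n)=\fn^n$, $E$ the induced membership relation, and $\iota(m+n)=\iota(m)\iota(n)$ finish the proof in a few lines.

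Your proposed workaround — characterize the copy of $\N$ intrinsically by demanding that the support be a bounded, $0$-containing subset of $\Gamma^{\ge 0}$ closed under subtracting $v(\fm)$, and then appeal to well-orderedness of supports to force it to be $\{0,v(\fm),\dots,nv(\fm)\}$ — is a genuine and plausible alternative idea for pinning down $\N$; the well-ordering argument does rule out nonstandard descent. However, as written the resolution paragraph is incoherent about which sort it is building: you call the initial-segment series the ``second sort,'' but those can only code finite initial segments, not arbitrary subsets of $\N$, so they cannot serve as the $\cP(\N)$-sort. You never return to say how, on this revised plan, the full $\cP(\N)$ is recovered (the earlier ``take all $g$ and read $A=\{n:\fm^n\in t^{\supp(g)}\}$'' is fine, but you dropped it when you tried to make the $\N$-sort definable). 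You also flag the crux as unverified. Net assessment: right skeleton, right obstacle, genuinely different and possibly salvageable idea for the $\N$-sort, but the argument as submitted is incomplete and roundabout compared to the paper's parameter trick.
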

\begin{proof}
Let $\approx$ be the definable equivalence relation on $K$ such that $f\approx g$, for $f,g\in K$, if and only if $\supp(f) = \supp(g)$. 
Take $\fn\in t^\Gamma$ such that $\fn\prec 1$. Consider the element $f= \sum_n \fn^n\in K$, and the set $S= \{g\in K: \supp(g) \subseteq \supp(f)\}$. 
Let $E\subseteq t^{\supp(f)}\times (S/\hspace{-1mm}\approx)$  be given by 
\[(\fm,g/\hspace{-1mm}\approx)\in E :\iff \fm \in t^{\supp(g)},\] 
and note that $E$ is definable in the $L$-structure $K$ since $R$ is. Define $\iota: \N \rightarrow t^{\supp(f)}$ by $\iota(n) = \fm^n$, and note that $\iota$ induces an isomorphism $(\N, \cP(\N);\in) \stackrel{\sim}{\longrightarrow} (t^{\supp(f)},S/\hspace{-1mm}\approx;E)$, such that $\iota(m+n) = \iota(m)\iota(n)$.
\end{proof}
\begin{corollary}
The theory of the $L$-structure $K$ is undecidable.
\end{corollary}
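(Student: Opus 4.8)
The plan is to deduce this corollary directly from Theorem~\ref{NinK} together with Corollary~\ref{MOSNplus}, using the standard fact that interpretability transfers undecidability. First I would recall the relevant general principle: if an $L$-structure $\cM$ interprets a structure $\cN$ in a language $L'$, then there is a computable map sending each $L'$-sentence $\sigma$ to an $L$-sentence $\sigma^\ast$ (obtained by relativizing quantifiers to the domain formula, replacing the primitives of $L'$ by their defining $L$-formulas, and passing to representatives of the interpreting equivalence relation) such that $\cN\models\sigma$ if and only if $\cM\models\sigma^\ast$. Consequently $\mathrm{Th}(\cN)\leq_m\mathrm{Th}(\cM)$, so if $\mathrm{Th}(\cN)$ is undecidable then so is $\mathrm{Th}(\cM)$.

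Next I would apply this to our situation: by Theorem~\ref{NinK}, the $L$-structure $K$ interprets $(\N,\cP(\N);+,\in)$, and by Corollary~\ref{MOSNplus} the theory of $(\N,\cP(\N);+,\in)$ is undecidable. Here one should note that $(\N,\cP(\N);+,\in)$ is a two-sorted structure, so the interpretation is a (one-dimensional) multi-sorted interpretation: the natural-number sort is interpreted on $t^{\supp(f)}$ with the addition coming from multiplication of monomials via $\iota(m+n)=\iota(m)\iota(n)$, and the powerset sort is interpreted on $S/\hspace{-1mm}\approx$ with $E$ interpreting $\in$; all three defining data are definable in $K$, as established in the proof of Theorem~\ref{NinK}. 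The reduction argument above works verbatim for multi-sorted interpretations, with quantifiers over each sort relativized to the corresponding domain formula.

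Putting these together, undecidability of $\mathrm{Th}\big(\N,\cP(\N);+,\in\big)$ pulls back along the computable translation to undecidability of $\mathrm{Th}(K)$ in the language $L$, which is the claim. There is essentially no obstacle here; the only point requiring a modicum of care is making explicit that the interpretation of Theorem~\ref{NinK} is genuinely an interpretation in the model-theoretic sense (domain formula, equivalence relation, and pullbacks of the primitives all $L$-definable, uniformly and without parameters beyond the fixed $\fn$ and $f$, which can be absorbed since $\fn$ is definable as \emph{some} element with $\fn\prec1$ and $f=\sum_n\fn^n$ is then the unique element with $\supp(f)=\{v(\fn^n):n\in\N\}$ characterized by $1\in\supp$ and closure of its support under the successor monomial). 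Once that is observed, the corollary is immediate.
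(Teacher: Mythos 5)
Your proposal is correct and follows essentially the same route as the paper, whose proof is a one-line appeal to Theorem~\ref{NinK} and Corollary~\ref{MOSNplus}; you have simply spelled out the standard interpretation-transfers-undecidability argument that the paper leaves implicit. The parenthetical fuss about eliminating the parameters $\fn$ and $f$ is unnecessary (undecidability passes through parametrized interpretations just as well), but it does no harm.
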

\begin{proof}
This follows easily from Theorem \ref{NinK} and Theorem \ref{MOSNplus}
\end{proof}

\subsection*{Defining the coefficient field $\smallk$} We now consider $K=\smallk(t^\Gamma))$ as an $L^-$-structure, where $L^-=\{0,1,+,\times, \bigO,V\}$. Note that for $f\in \bigO$ we have 
\[fV\subseteq V \iff f\in \smallk,\]
where we identify $\smallk$ with $\smallk t^0$. Thus we can define the coefficient field $\smallk$ in the $L^-$-structure $K$. 

\noindent
\textbf{Question:} Is it possible to define the monomial group $t^\Gamma$ in the $L'$-structure $K$?
\subsection*{An approach without the monomial group} Alternatively we may work in the setting of the two sorted structure $(K, \Gamma; v, T)$ where $K$ denotes the underlying field, $\Gamma$ is the ordered value group, $v$ is the valuation, and $T:K\times \Gamma \rightarrow K$ is such that $T(f,\gamma) = f|_\gamma$. Then we can define the binary relation $R\subseteq \Gamma\times K$ by 
\[(\gamma,f) \in R :\iff v(f-T(f,\gamma)) = \gamma .\]
We then obtain the following;
\begin{theorem}
The two-sorted structure $(K,\Gamma;v,T)$ interprets $(\N,\mathcal{P}(\N); +,\in)$.
\end{theorem}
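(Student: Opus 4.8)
The plan is to mirror the proof of Theorem~\ref{NinK}, replacing the monomial group $t^\Gamma$ by the value group $\Gamma$ itself, and replacing the relation $R$ on $t^\Gamma\times K$ by the relation $R\subseteq \Gamma\times K$ already defined above. First I would verify that $R$ indeed encodes support: for $f\in K$ and $\gamma\in\Gamma$, the truncation $T(f,\gamma)=f|_\gamma$ removes exactly the terms of $f$ with exponent $<\gamma$, so $f-T(f,\gamma)$ has support $\supp(f)\cap\Gamma^{\geq\gamma}$, and hence $v(f-T(f,\gamma))=\gamma$ exactly when $\gamma\in\supp(f)$ (with the convention that $v(0)=\infty\neq\gamma$ handling the empty case). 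Thus $(\gamma,f)\in R \iff \gamma\in\supp(f)$, which is the analogue of the defining property of $R$ in Section~\ref{HFWT}.

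Next I would pick an anchor element whose support is an arithmetic progression cofinal with a copy of $\N$. Choose any $\delta\in\Gamma^{>0}$ (possible since $\Gamma$ is nontrivial); the series $f=\sum_n t^{n\delta}$ lies in $K$ because $\{n\delta:n\in\N\}$ is well-ordered in $\Gamma$. Let $\approx$ be the $L^-$-definable (indeed $\{v,T\}$-definable) equivalence relation on $K$ given by $g\approx h\iff\supp(g)=\supp(h)$, which one expresses via $R$: $g\approx h$ iff for all $\gamma$, $(\gamma,g)\in R\leftrightarrow(\gamma,h)\in R$. Set $S=\{g\in K:\supp(g)\subseteq\supp(f)\}$, a definable set, and let $E\subseteq\supp(f)\times(S/\!\approx)$ be $(\gamma,g/\!\approx)\in E:\iff(\gamma,g)\in R$, which is well defined and definable. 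Define $\iota:\N\to\supp(f)$ by $\iota(n)=n\delta$; then $\iota$ is a bijection onto $\supp(f)$, it carries $+$ on $\N$ to $+$ on $\Gamma$ restricted to $\supp(f)$ (which is itself definable as addition in the value-group sort), and it carries $\in$ on $(\N,\cP(\N))$ to $E$, since every subset of $\supp(f)$ is the support of some element of $S$ (take coefficients $1\in\smallk^{\times}$ on the chosen exponents). Hence $\iota$ induces an isomorphism $(\N,\cP(\N);+,\in)\stackrel{\sim}{\longrightarrow}(\supp(f),S/\!\approx;+\!\restriction,E)$, and composing with Corollary~\ref{MOSNplus} gives the interpretation.

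The main obstacle, and the only real point requiring care, is showing that the relevant sets and maps are \emph{uniformly interpretable} in the two-sorted language $\{v,T\}$ rather than merely externally definable: one must check that $R$, $\approx$, $S$, $E$, and the distinguished element $f$ (as a parameter) are all given by $\{v,T\}$-formulas, and that the graph of $+$ on $\Gamma$ is available since $\Gamma$ is a full sort with its group structure — here I would note that even though the signature as written lists only $v$ and $T$, the value group comes with its ordered abelian group operations as part of the sort $\Gamma$, or else $+$ on $\Gamma$ can be recovered from $v$ via $v(fg)=v(f)+v(g)$. A secondary point is confirming that $\supp(f)$ is itself a definable subset of the $\Gamma$-sort, namely $\{\gamma:(\gamma,f)\in R\}$ with $f$ as a parameter. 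Once these definability bookkeeping items are dispatched, the isomorphism $\iota$ is immediate and the theorem follows exactly as in the monomial-group case.
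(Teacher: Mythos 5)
Your proof is correct and takes essentially the same route the paper intends by its remark that the proof is "similar to the proof of Theorem~\ref{NinK}": you translate the monomial-group construction to the $\Gamma$-sort, replacing $\fn = t^\delta$ and $\iota(n)=\fn^n$ with $\delta\in\Gamma^{>0}$ and $\iota(n)=n\delta$, and use the relation $R\subseteq\Gamma\times K$ (which encodes $\gamma\in\supp(f)$) in place of the relation $R\subseteq t^\Gamma\times K$. Your bookkeeping remarks about definability of $R$, $\approx$, $S$, $E$, and the availability of $+$ on the $\Gamma$-sort are exactly the right checks, and the argument goes through.
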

The proof is similar to the proof of Theorem \ref{NinK}.
\begin{corollary}
The theory of the two-sorted structure $(K, \Gamma; v, T)$ is undecidable.
\end{corollary}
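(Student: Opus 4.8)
The plan is to mimic the proof of Theorem \ref{NinK}, replacing the monomial group $t^\Gamma$ by the value group $\Gamma$ itself and the relation $R$ by its analogue $R\subseteq \Gamma\times K$ defined just above via $(\gamma,f)\in R\iff v(f-T(f,\gamma))=\gamma$. First I would observe that this $R$ is definable in $(K,\Gamma;v,T)$ essentially by construction, and that $\gamma\in\supp(f)$ is equivalent to $(\gamma,f)\in R$; indeed $v(f-T(f,\gamma))=\gamma$ says precisely that the first monomial of $f$ that was chopped off at $\gamma$ sits in degree $\gamma$, i.e.\ that $f_\gamma\neq 0$. So $R$ plays exactly the role that the relation $R$ on $t^\Gamma\times K$ played in Section \ref{HFWT}.

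Next I would set up the interpreting data. The equivalence relation $f\approx g\iff \supp(f)=\supp(g)$ is definable using $R$ just as before (two series have the same support iff they satisfy $R$ with exactly the same value-group elements, which is a first-order condition in the two-sorted language). Pick $\fn\in K$ with $v(\fn)>0$, i.e.\ $\fn$ infinitesimal and nonzero (this exists since $\Gamma\neq\{0\}$), and form $f=\sum_n \fn^n$, whose support is $\{n\cdot v(\fn):n\in\N\}\subseteq\Gamma^{>0}$. Let $S=\{g\in K:\supp(g)\subseteq\supp(f)\}$, a definable set (using $R$ to say every $\gamma$ with $(\gamma,g)\in R$ also satisfies $(\gamma,f)\in R$). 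Then define $E\subseteq \supp(f)\times (S/\!\approx)$ by $(\gamma,g/\!\approx)\in E\iff (\gamma,g)\in R$, which is well defined on $\approx$-classes and definable. Finally let $\iota:\N\to\supp(f)$ be $\iota(n)=n\cdot v(\fn)$; since $\supp(f^0)\cup\supp(f^1)\cup\cdots$ — more simply since $v(\fn^m)+v(\fn^n)=v(\fn^{m+n})$ — the map $\iota$ is a bijection $\N\to\supp(f)$ carrying $+$ to $+$, so it induces an isomorphism $(\N,\cP(\N);+,\in)\xrightarrow{\sim}(\supp(f),S/\!\approx;+_\Gamma,E)$. The right-hand structure is interpretable in $(K,\Gamma;v,T)$ (the first sort is the $\Gamma$-definable set $\supp(f)$ with the restriction of the group addition of $\Gamma$, the second sort is a definable quotient of a definable subset of the $K$-sort), so the theorem follows; the corollary is then immediate from Corollary \ref{MOSNplus}.

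The one genuinely new point compared with Theorem \ref{NinK} is bookkeeping about which sort things live in: here the "exponent" elements are literally elements of the value-group sort $\Gamma$ and addition of exponents is the ambient group operation on $\Gamma$, whereas in the monomial-group version it was multiplication in $t^\Gamma$. This is a notational rather than a mathematical obstacle. The only mild subtlety to check carefully is that $\supp(f)$, and the predicate $\gamma\in\supp(g)$ for $g$ ranging over the $K$-sort, are uniformly definable purely from $v$ and $T$ — but that is exactly what the definition of $R$ secures, since $(\gamma,g)\in R\iff g_\gamma\neq 0\iff\gamma\in\supp(g)$. Thus the proof is the same as that of Theorem \ref{NinK} with $t^\Gamma$ replaced by $\Gamma$ and $R$ replaced by its two-sorted analogue, and I would write it out at exactly that level of detail.
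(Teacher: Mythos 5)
Your proposal is correct and follows the paper's intended route exactly: re-run the interpretation argument of Theorem \ref{NinK} with the monomial group $t^\Gamma$ replaced by the $\Gamma$-sort and the one-sorted relation $R$ replaced by its two-sorted analogue, then conclude via Corollary \ref{MOSNplus}. The paper leaves the theorem's proof as ``similar to Theorem \ref{NinK}'' and the corollary as immediate, so you have filled in what was meant.

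One small slip worth fixing: you take $\fn\in K$ merely with $v(\fn)>0$ and assert that $\supp\bigl(\sum_n\fn^n\bigr)=\{n\cdot v(\fn):n\in\N\}$. That only holds if $\fn$ is (a scalar multiple of) a monomial; for instance with $\fn=t+t^2$ the support of $\sum_n\fn^n$ is all of $\N$, and in positive characteristic the support could even be irregular. Since the interpretation is allowed to use parameters from the $K$-sort, the fix is to choose $\fn=t^{\gamma_0}$ for some $\gamma_0>0$ (as the paper does with ``$\fn\in t^\Gamma$, $\fn\prec 1$''), after which $\supp(f)$ is indeed $\{n\gamma_0\}$ and $\iota(n)=n\gamma_0$ is the desired isomorphism. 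Everything else — the equivalence $(\gamma,f)\in R\iff f_\gamma\neq 0$, the definability of $\approx$, $S$, and $E$, and the change from multiplication of monomials to addition in $\Gamma$ — is correct.
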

\section{Dividing lines in model theoretic structures}

We have already shown how $(\N;+,\times)$ can be interpreted in the $L$-structure $K$ and thus we know that it has the strict order property and the tree property of the second kind among others. In this section we make explicit a binary relation that witnesses these properties inside $K$. 
\subsection*{The independence property}
Let $L$ be a language and $\cM= (M;\ldots)$ an $L$-structure. We say that an $L$-formula $\phi(x;y)$ \textbf{shatters} a set $A\subseteq M^x$ if for every subset $S$ of $A$ there is $b_S\in M^y$ such that for every $a\in A$ we have that $M\models \phi(a;b_S)$ if and only if $a\in S$. Let $T$ be an $L$-theory. We say that $\phi(x;y)$ has the \textbf{independence property with respect to }$T$, or \textbf{IP} for short, if there is a model $M$ of $T$, such that $\phi(x;y)$ shatters an infinite subset of $M^x$. 

For a partitioned formula $\phi(x;y)$ we let $\phi^{opp}(y;x) = \phi(x;y)$, that is, $\phi^{opp}$ is the same formula $\phi$ but where the role of the parameter variables and type variables is exchanged.

\begin{lemma}\label{IPopp}
A formula $\phi(x;y)$ has $\IP$ if $\phi^{opp}$ has $\IP$.
\end{lemma}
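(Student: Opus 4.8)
The plan is to prove this via a compactness/type-counting characterization of $\IP$, exploiting the symmetry that the independence property is really a statement about a bipartite "shattering" configuration that can be read in either direction. Recall that $\phi(x;y)$ has $\IP$ with respect to $T$ if and only if in some (equivalently, every sufficiently saturated) model $M\models T$ there exist sequences $(a_i)_{i\in\N}$ from $M^x$ and $(b_J)_{J\subseteq\N}$ from $M^y$ with $M\models\phi(a_i;b_J)\iff i\in J$. By compactness this is equivalent to: for each $n$ there is a model of $T$ containing $a_1,\dots,a_n$ in $M^x$ and $b_J\in M^y$ for each $J\subseteq\{1,\dots,n\}$ realizing the corresponding finite pattern.

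First I would unwind the definition of $\phi^{opp}$: the statement $\phi^{opp}(y;x)=\phi(x;y)$ means $\phi^{opp}$ is literally the same formula with the roles of the partitioned variable blocks swapped, so $M\models\phi^{opp}(b;a)\iff M\models\phi(a;b)$. Thus "$\phi^{opp}$ has $\IP$" unwinds to: there is a model $M\models T$, an infinite set $B\subseteq M^y$, and for each subset $S\subseteq B$ a parameter $a_S\in M^x$, such that for all $b\in B$ we have $M\models\phi(a_S;b)\iff b\in S$.

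The heart of the argument is then a finite combinatorial lemma, essentially a dualization of shattering: if a collection of "columns" indexed by an $n$-element set of $b$'s is shattered by a family of "rows" $a_S$, then by taking $2^n$ of those rows — one $a_S$ for each $S\subseteq B$ — and recording for each the pattern it cuts out, one can extract an $n'$-element subset of the $a_S$'s that is shattered by the $b$'s, where $n'\to\infty$ as $n\to\infty$. Concretely, a shattered set of size $n$ on the $y$-side produces a $2^n\times n$ $0$–$1$ matrix all of whose columns are distinct (indeed all $n$-bit patterns appear among the rows in a controlled way); reading it the other way, among its $2^n$ rows we can find $n$ whose restriction to suitable columns realizes all $2^n$ column-patterns — more cleanly, one invokes the standard fact (a VC-dimension duality bound) that a set system shattering a set of size $n$ has dual shatter function large enough to shatter a set of size $\lfloor\log_2 n\rfloor$ on the other side. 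Feeding the witnesses $a_{S_1},\dots,a_{S_{2^n}}$ and $b_1,\dots,b_n$ into this fact produces, inside $M$, a subset of the $a$'s of size $\lfloor\log_2 2^n\rfloor=n$ shattered by (a subset of) the $b$'s via $\phi$ itself. Since this works for every $n$, compactness gives a single model of $T$ in which $\phi(x;y)$ shatters an infinite set, i.e. $\phi(x;y)$ has $\IP$.

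The main obstacle is the finite duality step: one must be careful that the dual shattering is witnessed by elements actually present in the model $M$ (which it is, since all the $a_S$ and $b$ already live there) and that the size of the dually shattered set genuinely tends to infinity — hence the logarithmic loss, which is harmless for the qualitative statement. Everything else — the translation through $\phi^{opp}$ and the compactness packaging — is routine. I note also that the converse direction follows by applying the lemma to $\phi^{opp}$ in place of $\phi$ together with the observation $(\phi^{opp})^{opp}=\phi$, so in fact $\phi$ has $\IP$ if and only if $\phi^{opp}$ does; the lemma as stated records just the one implication needed in the sequel.
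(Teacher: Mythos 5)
Your proof is correct and reaches the right conclusion, but by a genuinely different route from the paper's. The paper gives a short infinitary argument: by compactness one inflates the shattered set so it is indexed by $\cP(\N)$, with shattering witnesses $b_I$ indexed by $I\subseteq\cP(\N)$; picking out the coordinate sets $I_i=\{Y\subseteq\N: i\in Y\}$, one checks directly that $\models\phi(a_J,b_{I_i})\iff i\in J$, so $\{b_{I_i}:i\in\N\}$ is shattered by $\phi^{opp}$ with witnesses the $a_J$. No combinatorial bound is needed. Your argument instead passes through the finitary VC-duality inequality (Assouad's bound), extracting arbitrarily large finite shattered sets on the $x$-side and then applying compactness once at the end. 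Both are valid; the paper's version is self-contained and shorter, while yours is quantitative and hews to the usual combinatorial phrasing of this symmetry. One small but real slip in your write-up: a shattered set of size $n$ on the $y$-side yields a dually shattered set of size about $\lfloor\log_2 n\rfloor$ on the $x$-side, not $n$. Your computation $\lfloor\log_2 2^n\rfloor = n$ conflates the number $2^n$ of witnesses $a_S$ with the size $n$ of the shattered set, and the intermediate claim that $n$ rows ``realize all $2^n$ column-patterns'' cannot hold literally when only $n$ columns are available. Since $\lfloor\log_2 n\rfloor\to\infty$ the compactness step still closes the argument, so only the stated quantities need correcting.
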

\begin{proof}
By compactness the formula $\phi(x;y)$ shatters some set $\{a_J: J\in \cP(\N)\}$. Let the shattering be witnessed by $\{b_I:I\subseteq \cP(\N)\}$. Let $B= \{b_{I_i}: i\in\N\}$ be such that $I_i = \{Y\subseteq \N: i\in Y\}$. Then we have
\[\models\phi(a_J, b_{I_i}) \iff i \in J,\]
and thus $\phi^{opp}$ shatters $B$. 
\end{proof}

\subsection*{The Strict Order Property}
We say that a formula $\phi(x;y)$ has the Strict Order Property, or $\SOP$ for short, if there are $b_i\in M^y $, for $i\in \N$, such that $\phi(M^x,b_i) \subset \phi(M^x,b_j)$ whenever $i<j$. 

\begin{proposition}
The formula $\varphi(x;y)$, defining the relation $R$ as in section \ref{HFWT}, has $\SOP$.
\end{proposition}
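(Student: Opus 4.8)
The plan is to unwind what $\SOP$ asks for in the case of this specific $\varphi$. Since $\varphi(x;y)$ defines $R=\{(\fm,f)\in t^\Gamma\times K:\fm\in t^{\supp(f)}\}$, for any parameter $b\in K$ the solution set $\varphi(K^{x},b)$ is precisely $\{\fm\in t^\Gamma:\fm\in t^{\supp(b)}\}=t^{\supp(b)}$. Therefore, to witness $\SOP$ it is enough to exhibit elements $b_i\in K$, $i\in\N$, whose supports form a strictly increasing chain $\supp(b_0)\subsetneq\supp(b_1)\subsetneq\cdots$, since then $\varphi(K^{x},b_i)=t^{\supp(b_i)}$ yields the required chain $\varphi(K^{x},b_0)\subset\varphi(K^{x},b_1)\subset\cdots$.

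Next I would produce such a chain explicitly, using only the standing assumption that $\Gamma$ is nontrivial. As in the proof of Theorem~\ref{NinK}, fix $\fn\in t^\Gamma$ with $\fn\prec 1$, say $\fn=t^\gamma$ for some $\gamma\in\Gamma^{>0}$ (such $\gamma$ exists since $\Gamma\neq\{0\}$), and set $b_i:=\sum_{n=0}^{i}\fn^n$ for $i\in\N$. Each $b_i$ has finite support $\supp(b_i)=\{0,\gamma,2\gamma,\ldots,i\gamma\}$, which is trivially well-ordered, so indeed $b_i\in K$. Since an ordered abelian group is torsion-free, the elements $0,\gamma,\ldots,i\gamma$ are pairwise distinct, so $\supp(b_i)=\supp(b_{i-1})\cup\{i\gamma\}$ properly contains $\supp(b_{i-1})$; hence $\bigl(\supp(b_i)\bigr)_{i\in\N}$ is strictly increasing, and by the first paragraph the $b_i$ witness that $\varphi$ has $\SOP$.

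The computations involved are entirely routine; the only point that genuinely uses a hypothesis — and thus the closest thing to an obstacle — is the \emph{properness} of the inclusions, which is exactly where nontriviality of $\Gamma$ enters, since if $\Gamma=\{0\}$ every support inside $K$ is a subset of $\{0\}$ and no infinite strict chain can exist. I would also remark that this same $\fn$ and the series $f=\sum_n\fn^n$ already appeared in Theorem~\ref{NinK}, so the witness here is not new information: the $b_i$ are precisely the successive truncations $f|_{\fn^{i+1}}$, which climb up toward $f$ inside $K$, fitting the truncation theme of the paper.
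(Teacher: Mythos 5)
Your proposal is correct and follows essentially the same route as the paper's proof: both produce a sequence $b_0, b_1, \ldots \in K$ whose supports form a strictly increasing chain, from which $\varphi(K^x,b_i)=t^{\supp(b_i)}$ gives the required strict chain of solution sets. The only difference is cosmetic: the paper takes an arbitrary strictly increasing sequence $\{\theta_i\}\subseteq\Gamma$ and sets $b_n=\sum_{i\le n}t^{\theta_i}$, whereas you instantiate it with $\theta_i=i\gamma$ for a fixed $\gamma>0$ and note the witnesses are the truncations $f|_{\fn^{i+1}}$ of the series $f=\sum_n\fn^n$ from Theorem~\ref{NinK}.
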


\begin{proof}
Let $\Theta= \{\theta_i: i\in \N\}$ be any subset of $\Gamma$ such that $\theta_i <\theta_j$ for $i<j$, and consider the set $\{f_n = \sum_{i=0}^n t^{\theta_i}: i \in \N\}$. Note that $\varphi(K,f_m)\subset \varphi(K,f_n)$ for $m<n$.  
\end{proof}

\subsection*{The tree property of the second kind}
We say that a formula $\phi(x;y)$ has the \textbf{tree property of the second kind}, or $\TP$ for short, if there are tuples $b_j^i\in M^y$, for $i,j\in \N$, such that for any $\sigma:\N\rightarrow \N$ the set $\{\phi(x;b^i_{\sigma(i)}): i\in \N\}$ is consistent and for any $i$ and $j\neq k$  we have $\{\phi(x;b^i_j),\phi(x;b^i_k)\}$ is inconsistent.

\begin{lemma}\label{TP2IP}
If $\phi(x;y)$ has $\TP$ then $\phi$ has $\IP$.
\end{lemma}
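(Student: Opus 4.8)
The plan is to show that the very same array of parameters $\{b^i_j : i,j\in\N\}$ witnessing $\TP$ for $\phi(x;y)$ can be used, after restricting to a suitable sub-array, to exhibit the independence property for $\phi$. The basic idea is that along each row $i$ the formulas $\phi(x;b^i_j)$ are pairwise inconsistent, so in any model an element $a$ satisfying $\phi(a;b^i_{\sigma(i)})$ for all $i$ (such $a$ exists by the consistency part of $\TP$, together with compactness) ``picks out'' exactly one column per row; this is precisely the kind of coordinate-wise choice that encodes an arbitrary subset.

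First I would set up the witnesses. For each function $\sigma:\N\to\N$, the consistency clause of $\TP$ gives a type $\{\phi(x;b^i_{\sigma(i)}) : i\in\N\}$; passing to a sufficiently saturated model $M$ of $T$ (or using compactness), fix $a_\sigma\in M^x$ realizing it. Now I would restrict attention to the columns indexed by $\N$ in the first row only is not enough; instead, to shatter an infinite set I would use the elements $\{a_\sigma : \sigma\in {}^{\N}\N\}$ as the objects to be shattered and the parameters $b^i_j$ as the potential ``defining'' parameters. Concretely, consider the $i$-th row: I claim the parameter $b^i_0$ together with the $a_\sigma$ behaves well, namely $M\models\phi(a_\sigma;b^i_j)$ if and only if $\sigma(i)=j$. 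One direction is the choice of $a_\sigma$; the other follows from pairwise inconsistency of $\{\phi(x;b^i_j),\phi(x;b^i_k)\}$ for $j\neq k$, since $a_\sigma$ already satisfies $\phi(x;b^i_{\sigma(i)})$ and hence cannot satisfy $\phi(x;b^i_k)$ for $k\neq\sigma(i)$.

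From here I would extract genuine shattering. Restrict to the subfamily of $\sigma$ that only take values in $\{0,1\}$, and for a subset $Y\subseteq\N$ let $\sigma_Y$ be its characteristic function; then $\{a_{\sigma_Y}:Y\subseteq\N\}$ is an infinite set (its members are pairwise distinct because for $Y\neq Y'$ they disagree on some row), and for each $i\in\N$ we have $M\models\phi(a_{\sigma_Y};b^i_1)$ if and only if $i\in Y$. Thus the family $\{b^i_1 : i\in\N\}$ witnesses that $\phi$ shatters $\{a_{\sigma_Y}:Y\subseteq\N\}$, so $\phi$ has $\IP$. The main obstacle to watch is the direction of the shattering: what we have directly is that for fixed $i$, varying $j$ separates the $a_\sigma$ by the value $\sigma(i)$, which is a statement about $\phi^{opp}$; so strictly speaking the cleanest route is to observe that the above shows $\phi^{opp}$ has $\IP$ and then invoke Lemma \ref{IPopp} to conclude that $\phi$ has $\IP$. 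I would present the argument in that order to avoid any index juggling.
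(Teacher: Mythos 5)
Your argument is correct and is essentially the paper's proof: both realize, for each $Y\subseteq\N$, the type $\{\phi(x;b^i_{\sigma_Y(i)}):i\in\N\}$ given by the $\{0,1\}$-valued $\sigma_Y$, observe that this makes $\phi^{opp}$ shatter one column $\{b^i_j:i\in\N\}$ of the array, and then appeal to Lemma~\ref{IPopp}. The mid-proof sentence claiming $\phi$ shatters $\{a_{\sigma_Y}\}$ has the roles reversed, but you catch and correct this at the end, so the final reasoning matches the paper's.
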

\begin{proof}
Let $\{\phi(x,b^i_j)\}_{i,j\in \N}$ witness $\TP$ for $\phi(x;y)$. Fix $j$. Without loss of generality we will assume that $j=0$. Consider the set $\{b^i_0\}$. Let $I\subseteq \N$. By $\TP$ there is $a_I \in M^x$ such that \[\cM\models  \phi(a_I;b^i_j) \iff (i\in I \text{ and }j=0, \text{ or } i\notin I \text{ and } j=1).\]
Thus by Lemma \ref{IPopp} $\phi(x;y)$ has IP.
\end{proof}

\begin{lemma}\label{TP2Char}
Let $A=\{a_i:i\in \N\}\subseteq M^x$ and $B=\{b_I:I\in \mathcal{P}(\N)\}\subseteq M^y$. Assume that there is $\phi(x;y)$ such that for any fixed $b_I\in B$
\[\models \phi(a;b_I)\iff \text{ there is }  i\in I \text{ such that }a=a_i .\]
Then $\phi$ has $\TP$. 
\end{lemma}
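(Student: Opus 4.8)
The plan is to produce directly a doubly-indexed family of parameters $(b^i_j)_{i,j\in\N}$ witnessing $\TP$ for $\phi$. Every $b^i_j$ will simply be one of the given elements $b_I\in B$, so the task reduces to choosing sets $I^i_j\subseteq\N$ and putting $b^i_j:=b_{I^i_j}$; by the hypothesis on $\phi$ we then have $\phi(M^x,b^i_j)=\{a_\ell:\ell\in I^i_j\}$. We may assume the $a_i$ are pairwise distinct. Under this identification the two requirements in the definition of $\TP$ become purely combinatorial conditions on the $I^i_j$: (i) for each fixed $i$ the sets $I^i_0,I^i_1,\dots$ should be pairwise disjoint, and (ii) for every $\sigma:\N\to\N$ and every finite $F\subseteq\N$ the intersection $\bigcap_{i\in F}I^i_{\sigma(i)}$ should be nonempty. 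Indeed, (i) forces $\phi(M^x,b^i_j)$ and $\phi(M^x,b^i_k)$ to be disjoint for $j\neq k$ (the $a_\ell$ being distinct), while (ii) together with compactness will give consistency of each path, since ``consistent'' only means finitely satisfiable.

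To build such an array I would fix a bijection $n\mapsto(n_0,n_1,n_2,\dots)$ between $\N$ and the set of all sequences $\N\to\N$ that are eventually zero, and set
\[I^i_j:=\{\,n\in\N:\ n_i=j\,\}.\]
For fixed $i$ every $n$ lies in exactly one $I^i_j$, so $(I^i_j)_{j\in\N}$ is a partition of $\N$, giving (i). For (ii), given $\sigma$ and a finite $F$, the eventually-zero sequence taking the value $\sigma(i)$ at each $i\in F$ and $0$ elsewhere equals $(n_0,n_1,\dots)$ for some $n\in\N$, and that $n$ lies in $I^i_{\sigma(i)}$ for every $i\in F$.

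It then remains to read off $\TP$. For the inconsistency clause: fix $i$ and $j\neq k$; if some $a$ satisfied both $\phi(x;b^i_j)$ and $\phi(x;b^i_k)$ then, by the hypothesis on $\phi$, $a=a_\ell=a_{\ell'}$ with $\ell\in I^i_j$ and $\ell'\in I^i_k$, so $\ell=\ell'\in I^i_j\cap I^i_k=\emptyset$, impossible; hence $\{\phi(x;b^i_j),\phi(x;b^i_k)\}$ is inconsistent. For the consistency clause: fix $\sigma:\N\to\N$ and a finite $F\subseteq\N$, pick $n\in\bigcap_{i\in F}I^i_{\sigma(i)}$ as above, and note that $\models \phi(a_n;b^i_{\sigma(i)})$ for every $i\in F$ by the hypothesis (since $n\in I^i_{\sigma(i)}$); thus every finite subset of $\{\phi(x;b^i_{\sigma(i)}):i\in\N\}$ is satisfiable, and by compactness the whole set is consistent. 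Therefore $\phi$ has $\TP$.

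The single point carrying any content is the combinatorial step: arranging simultaneously that the columns of each row partition $\N$ while every finite rectangle of the array has a common element. Condition (i) alone rules out nonempty \emph{infinite} column-intersections, so it is essential that $\TP$-consistency is only finite satisfiability and hence that (ii) suffices; the bijection with eventually-zero sequences is the simplest device making both (i) and (ii) hold, and everything else is a mechanical unwinding of the hypothesis on $\phi$.
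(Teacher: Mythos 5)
Your proof is correct and follows the same overall strategy as the paper: translate the problem to a purely combinatorial one of producing an array of index sets $I^i_j\subseteq\N$ with the sets in each row pairwise disjoint and with every finite ``column selection'' having nonempty intersection, then set $b^i_j=b_{I^i_j}$, and read off the two $\TP$ clauses (using compactness for the infinite path, as you correctly emphasize). The difference is in how the array is realized. The paper builds the sets $A^i_j$ out of towers of primes, with $A^0_j$ the positive powers of $p_j$ and $A^i_j$ obtained by exponentiating arbitrary primes by elements of $A^{i-1}_j$; disjointness is then extracted by repeatedly invoking unique factorization, and the nonempty finite intersection is exhibited by a concrete prime tower. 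You instead fix a bijection between $\N$ and the eventually-zero sequences $\N\to\N$ and take $I^i_j=\{n:n_i=j\}$, which makes each row a partition of $\N$ (so disjointness is immediate) and makes any finite partial assignment of values trivially extendable to an eventually-zero sequence (so nonemptiness of finite column-intersections is immediate). Your device is more transparent and avoids the number-theoretic bookkeeping, at no cost in generality; you also make explicit the harmless but necessary assumption, left implicit in the paper, that the $a_i$ are pairwise distinct, which is what lets disjointness of the $I^i_j$ transfer to inconsistency of the corresponding pairs of instances of $\phi$.
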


\begin{proof}
Let $\phi$, $A$, and $B$ be as in the hypothesis of the Lemma. Let $P=\{p_i \in \N\}$ be the set of primes where $p_i\neq p_j$ for $i\neq j$. We construct $A^i_j\subseteq \N$ recursively as follows:
\begin{itemize}
	\item $A^0_j:= \{p_j^{n_0}:n_0>0\}$
    \item $A^i_j:= \{p_{n_i}^m: n_i\in \N,\ m\in A^{i-1}_{j}\}.$
\end{itemize}
So for example $A^1_2 =\{p_{n_1}^{p_2^{n_0}}: n_1,n_0>0\}$. \\
\textbf{Claim 1} For $\alpha \in \N^{n}$ we have that $\bigcap_{i<n} A^i_{\alpha(i)}\neq \emptyset$.\\
It is not hard to check that 
\[p_{\alpha(0)}^{\text{\reflectbox{$\ddots$}} ^{p_{\alpha(n-1)}}}\in \bigcap_{i<n} A^i_{\alpha(i)}.\]
\textbf{Claim 2:} For fixed $i$, and $j\neq k$ we have $A^i_j\cap A^i_k = \emptyset$. \\
For simplicity in notation we prove the case where $i=1$. Let $m\in A^1_j\cap A^1_k$. Then $m = p_{m_1}^{p_j^{m_0}} = p_{n_1}^{p_k^{n_0}}$. Since $p_j^{m_0}$ and $p_k^{n_0}$ are non-zero, we have that $p_{m_1} = p_{n_1}$, and thus $p_j^{m_0} = p_k^{n_0}$. Similarly, since $m_0$ and $n_0$ are non-zero we conclude that $p_j=p_k$, and thus $j=k$. 

Now let $b^i_j = b_{A^i_j}$. By compactness, together with Claim 1, we get that the set $\{\phi(x;b^i_{\sigma(i)}): i\in \N\}$ is consistent.
By the hypothesis of the Lemma, together with claim 2, we get that for any $i$ and $j\neq k$  we have $\{\phi(x;b^i_j),\phi(x;b^i_k)\}$ is inconsistent.
\end{proof}
 If $\phi(x;y)$ and $A$ are as in the lemma, we say that $\phi(x;y)$ and $B$ \textbf{only shatter} $A$ \textbf{in} $M$. Note that in this case $A$ is in fact a definable set.

\begin{proposition}\label{TP2forK}
	The formula $\varphi(x;y)$, defining the relation $R$ as in section \ref{HFWT}, has $\TP$.
\end{proposition}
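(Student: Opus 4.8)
The plan is to reduce the claim to Lemma~\ref{TP2Char}: it suffices to produce a set $A=\{a_i:i\in\N\}\subseteq K$ together with a family $B=\{b_I:I\in\cP(\N)\}\subseteq K$ so that $\varphi(x;y)$ and $B$ only shatter $A$ in $K$, i.e.\ for every fixed $I$ and every $a\in K$ we have $K\models\varphi(a;b_I)$ iff $a=a_i$ for some $i\in I$. Here $\varphi(x;y)$ is the formula ``$x\in t^\Gamma$ and $y-y|_x\asymp x$'' defining $R$, as in Section~\ref{HFWT}.

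Since $\Gamma$ is nontrivial, I would pick $g\in\Gamma$ with $g>0$ and set $\gamma_n:=(n+1)g$, so that $0<\gamma_0<\gamma_1<\cdots$ is strictly increasing; consequently $\{\gamma_n:n\in\N\}$ and each of its subsets is well-ordered. Let $a_n:=t^{\gamma_n}$ and, for $I\subseteq\N$, let $b_I:=\sum_{i\in I}t^{\gamma_i}$, which lies in $K$ precisely because $\{\gamma_i:i\in I\}$ is well-ordered, and which has $\supp(b_I)=\{\gamma_i:i\in I\}$. Then I would check the shattering condition directly: if $a\notin t^\Gamma$ then $\varphi(a;b_I)$ fails; if $a=t^\gamma$, then $b_I-b_I|_a=b_I-b_I|_\gamma$ is the sum of those terms $t^{\gamma_i}$ with $i\in I$ and $\gamma_i\geq\gamma$, which is $\asymp t^\gamma=a$ exactly when $\gamma\in\supp(b_I)$, i.e.\ exactly when $a=a_i$ for some $i\in I$. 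Feeding $A$ and $B$ into Lemma~\ref{TP2Char} then yields that $\varphi$ has $\TP$.

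There is essentially no hard step here; the one point that must not be overlooked is that each $b_I$ has to be a genuine Hahn series, which is exactly why the sequence $(\gamma_n)$ is chosen strictly increasing rather than arbitrary — then every support $\{\gamma_i:i\in I\}$ order-embeds into $(\N,\leq)$ and is automatically well-ordered. The remaining verifications are a routine unwinding of the definitions of truncation, of the relation $\asymp$ (equivalently, of the canonical valuation on $\smallk((t^\Gamma))$), and of the relation $R$, exactly as already carried out in Section~\ref{HFWT}.
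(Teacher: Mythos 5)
Your proof is correct and takes essentially the same route as the paper: choose an infinite well-ordered subset $\Theta$ of $\Gamma$, let $A = t^\Theta$ and $B = \{\sum_{\delta\in\Delta}t^\delta : \Delta\subseteq\Theta\}$, verify that $\varphi$ and $B$ only shatter $A$, and invoke Lemma~\ref{TP2Char}. The only difference is that you explicitly exhibit $\Theta = \{(n+1)g : n\in\N\}$ using nontriviality of $\Gamma$ and spell out the shattering verification, both of which the paper leaves implicit.
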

\begin{proof}
Let $\Theta$ be a well-ordered subset of $\Gamma$ and consider the sets \[t^\Theta=\{t^\theta:\theta \in \Theta\}, \text{ and } B= \left\{\sum_{\delta\in \Delta} t^\delta: \Delta \subseteq \Theta\right\}.\] 
It is clear then that $\varphi(x;y)$ and $B$ only shatter $t^\Theta$, and thus by Lemma \ref{TP2Char} the formula $\varphi(x;y)$ has $\TP$.
\end{proof}

\begin{corollary}
The formula $\varphi(x;y)$, defining the relation $R$ as in section \ref{HFWT}, has $\IP$.
\end{corollary}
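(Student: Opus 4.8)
The plan is to derive this corollary directly by chaining together the two results already established for the formula $\varphi(x;y)$ defining $R$. First I would invoke Proposition \ref{TP2forK}, which tells us that $\varphi(x;y)$ has $\TP$. Then I would apply Lemma \ref{TP2IP}, which asserts that any formula with $\TP$ also has $\IP$. Composing these two facts yields immediately that $\varphi(x;y)$ has $\IP$, and there is nothing further to prove.

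There is essentially no obstacle here: the corollary is a formal consequence of two statements proved immediately above it in the same section, so the proof is a one-line citation. The only thing to be careful about is that we are applying Lemma \ref{TP2IP} to the \emph{same} partitioned formula $\varphi(x;y)$ (with the same designation of object versus parameter variables) for which Proposition \ref{TP2forK} established $\TP$; since both statements are phrased in terms of the fixed formula $\varphi(x;y)$ from section \ref{HFWT}, this is automatic.

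\begin{proof}
By Proposition \ref{TP2forK} the formula $\varphi(x;y)$ has $\TP$. Hence by Lemma \ref{TP2IP} it has $\IP$.
\end{proof}
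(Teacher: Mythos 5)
Your proof is exactly the paper's argument: cite Proposition \ref{TP2forK} for $\TP$ and then Lemma \ref{TP2IP} to conclude $\IP$. Correct, and identical in approach.
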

\begin{proof}
The result follows directly from proposition \ref{TP2forK} and lemma \ref{TP2IP}.
\end{proof}

\end{document}